\newcommand{\y}{\mathbf}
\newcommand{\bZ}{\mathbb Z}
\newcommand{\bR}{\mathbb R}
\newtheorem{op}{Open Problem}
\newtheorem{exmp}{Example}[section]
\title{H-Representation of the {Kimura-3} Polytope} 
\author{Marie Mauhar\footnotemark[2]\
\and Joseph Rusinko\footnotemark[3]\
\and Zoe Vernon\footnotemark[4]\ }
\begin{document}
\maketitle

\renewcommand{\thefootnote}{\fnsymbol{footnote}}

\footnotetext[2]{Lenoir-Rhyne University }
\footnotetext[3]{Hobart and William Smith Colleges, email: rusinko\@hws.edu}
\footnotetext[4]{Washington University}

\renewcommand{\thefootnote}{\arabic{footnote}}

\slugger{mms}{xxxx}{xx}{x}{x--x}%slugger should be set to mms, siap, sicomp, sicon, sidma, sima, simax, sinum, siopt, sisc, or sirev

\begin{abstract}
Given a group-based Markov model on a tree, one can compute the vertex representation of a polytope describing a toric variety associated to the algebraic statistical model. In the case of $\bZ_2$ or $\bZ_2\times\bZ_2$, these polytopes have applications in the field of phylogenetics. We provide a half-space representation for the $m$-claw tree where $G=\bZ_2\times\bZ_2$, which corresponds to the {Kimura-3} model of evolution. 
\end{abstract}

\begin{keywords} Facet description, Polytopes, Group-based phylogenetic models \end{keywords}

\begin{AMS}	52B20 \end{AMS}

\pagestyle{myheadings}
\thispagestyle{plain}
\markboth{Kimura-3 Polytope}{}

\section{Introduction}
\label{intro}
Phylogenetic trees depict evolutionary relationships between proteins, genes or organisms. Many tree reconstruction methods assume evolution is described as a Markov process along the edges of the tree. The Markov matrices define the probability that a characteristic changes along the edge of the tree. The probability of observing a particular collection of characteristics at the leaves of the tree can be computed as a polynomial in the (unknown) entries of the transition matrices. See \protect\cite{geometryk3anytree} for an overview of this algebraic statistical viewpoint on phylogenetics.

Given a tree and evolutionary model, invariants are polynomial relationships satisfied by the expected pattern frequencies occurring in sequences evolving along the tree under the Markov model \protect\cite{cavender,evans}. Algebraic geometry provides a framework for computing the complete set of invariants as the elements of a prime ideal that define an algebraic variety. Many classical varieties arise in the study of phylogenetic models \protect\cite{phyloAG} as do modern objects such as conformal blocks \protect\cite{conformal} and Berenstein-Zelevinsky triangles \protect\cite{cblocksandBZ}. 

For some models of evolution, known as \emph{group-based models}, there is a finite group which acts freely and transitively on the set of states in the transition matrix. Two such models are the {Kimura-3} model ($G=\bZ_2\times\bZ_2$), which accounts for differences in DNA mutation rates between transitions and both types of transversions \protect\cite{kimura}, as well as the binary symmetric model ($G=\bZ_2)$. In a group-based model the transition matrices can be simultaneously diagonalized \protect\cite{evans}. After the change of coordinates induced by these diagonalizations, the variety is seen to be toric \protect\cite{pachtersturmfels, toricideal}. As a toric variety, there is an associated polytope whose combinatorial structure describes the geometry of the variety. We refer the reader to \protect\cite{cox,fulton} for background on toric varieties and polytopes.

For a fixed tree and finite group there is an algorithm for computing the vertices of this polytope  (i.e. the vertex or \emph{V-representation}) \protect\cite{local,dbm}. There is an equivalent description of the polytope as the collection of points satisfying a set of linear inequalities, known as the half-space or \emph{H-representation}.  Translating between the vertex and half-space description of a polytope is an NP-complete problem in general \protect\cite{verticesnphard} and is challenging even in classes of examples where a plausible set of facets can be proposed. Subsequently, the H-representation for group-based models is known only in the case of the binary symmetric model \protect\cite{binary}. 

Finding an H-representation is difficult even for simple classes of trees. Such a representation is unknown in the case of a tree with one interior node and $m$ leaves, a tree referred to as the \emph{$m$-claw tree}. Claw trees play an important role in phylogenetics, as the variety associated with any tree can be computed by a sequence of toric fiber products of the varieties associated to claw trees \protect\cite{toricideal,toricfiber}. 

In this article we provide an H-representation associated to the claw tree of the {Kimura-3} model. This description of the {Kimura-3} polytope builds off of a well-known identification of the polytope for the binary symmetric model with the demihypercube. This work complements the growing body of knowledge about the geometry of the {Kimura-3} variety \protect\cite{geometryofk3,hilbert,geometryk3anytree}.

\subsection*{Outline of Article}
\label{subsec:outline}
In \S \protect\ref{sec:background} we review the vertex description of the polytope associated with the binary symmetric and {Kimura-3} models. We introduce a polytope $\Delta(m)$ that we propose as the H-representation of the polytope associated with the {Kimura-3} model for an arbitrary claw tree and show that if $\Delta(m)$ is integral, then it is the H-representation. In \S \protect\ref{sec:alternative} we introduce an isomorphism between $\Delta(m)$ and a polytope $\Delta'(m)$ described in terms of a $3\times m$ matrix whose row and column coordinates satisfy a set of inequalities reminiscent of those that define the demihypercube. We then identify a connection between the number of integral coordinates of a point and the number of facets on which it lies. This connection is utilized in \S \protect\ref{sec:integrality} to prove the main result of our paper, that $\Delta(m)$ is an integral polytope, and thus provides an H-representation of the {Kimura-3} polytope associated with a claw tree. We conclude with a collection of open problems about the combinatorics and geometry of group-based models.

\section{Polytopes for Group-Based Models}
\label{sec:background}
For a fixed tree and group-based model with group $G$, the V-representation of the polytope can be computed using an algorithm described in \protect\cite{local,dbm}. For each leaf, one defines a map $g:G \rightarrow \bR^{|G|-1}$ where the identity maps to $(0,0,\cdots,0)$ and each non-identity element maps to a standard basis vector of the integral lattice $\bZ^{|G|-1}$. Under this identification, the polytope associated to the $m$-claw tree is the convex hull in $\bR^{m(|G|-1)}$ of all possible labellings of the leaves with $m$ group elements that sum to the identity. Throughout this paper when discussing claw trees we assume that the number of leaves is at least three. We view $\bR^{m(|G|-1)}$ as entries of a $|G|-1 \times m$ matrix $M$ whose columns are indexed by the $m$ leaves of the tree.

\begin{definition}[\protect\cite{toricideal} as described in \protect\cite{dbm}]
\label{thm:K3vert} The Kimura polytope, denoted $K(m)$ is the polytope associated to the $m$-claw tree with the group $\bZ_2 \times \bZ_2$. The vertices of $K(m) \subseteq \bR^{3m}$ are in bijection with collections of $m$ elements of $\bZ_2 \times \bZ_2$ such that the sum of these elements is the identity.  We identify the of elements of $\bZ_2 \times \bZ_2$ with column vectors in a $3 \times m$ matrix $M$ under the map $g:\bZ_2 \times \bZ_2 \rightarrow \bR^3$ given by $g(0,0)=(0,0,0),   g(1,0)=(1,0,0),g(0,1)=(0,1,0)$, and $g(1,1)=(0,0,1)$.
\end{definition}

Our construction of a facet description for the $K(m)$ was inspired by the H-representation for the polytope associated to the group $\bZ_2$. In the case of $\bZ_2$ the polytope is the demihypercube which has a well known H-representation. 
\begin{exmp}[See \protect\cite{binary}]
The H-representation of the demihypercube is given by \[
DH(m)=\{d\in [0,1]^m:\sum_{i\in A}d_{i}\leq| A|-1+\sum_{j\notin A}d_{j}\},
\] where $A$ ranges over all subsets of $\{1,2,\cdots, m\}$ of odd cardinality.
\end{exmp}

To connect the two polytopes, we note that an element of $\bZ_2 \times \bZ_2$ is the identity if, and only if, its image is the identity under all homomorphisms from $\bZ_2 \times \bZ_2$ to $\bZ_2$. Therefore the sum of the images of all of the group elements defining a vertex of the Kimura polytope must be the identity under the three non-trivial homomorphisms from $\bZ_2 \times \bZ_2$ to $\bZ_2$. We confirm this relationship in Theorem~\protect\ref{thm:Deltaintegral}, where we show the following inequalities provide an H-representation for the {Kimura-3} polytope.

\begin{definition} 
\label{def:delta}
The polytope $\Delta(m)$ is the set of points in $\bR^{3m}$ satisfying $ x_{ij} \ge 0$ for all $1 \le i \le 3$ and $ 1 \le j \le m$ and $ \displaystyle\sum_{i=1}^3 x_{ij} \le 1$ for all $ 1\le j \le m$, as well as the following collection of \emph{$A$-inequalities}:
\begin{eqnarray*}
\sum_{j \in A}( x_{1j} + x_{2j}) &\le& |A|-1+ \sum_{l \not\in A}(x_{1l} + x_{2l}), \\
\sum_{j \in A}( x_{1j} + x_{3j}) &\le& |A|-1+ \sum_{l \not\in A}(x_{1l} + x_{3l})\mbox{, and } \\
\sum_{j \in A}( x_{2j} + x_{3j}) &\le& |A|-1+ \sum_{l \not\in A}(x_{2l} + x_{3l}).
\end{eqnarray*}
Where $A$ ranges over all odd cardinality subsets of $\{1,2,3,\cdots, m\}$.
\end{definition}

We first show that $\Delta(m)$ contains the {Kimura-3} polytope.

\begin{theorem}
\label{thm:KinDelta}
The polytope $K(m)\subseteq \Delta(m)$.
\end{theorem}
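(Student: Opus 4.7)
The plan is to use convexity: since $\Delta(m)$ is defined by linear inequalities, it is convex, so it suffices to verify that every vertex of $K(m)$ lies in $\Delta(m)$. The non-negativity constraints $x_{ij}\ge 0$ and the column-sum constraints $\sum_i x_{ij}\le 1$ are immediate from Definition~\ref{thm:K3vert}, because each column of the vertex matrix $M$ is one of $(0,0,0)^T,(1,0,0)^T,(0,1,0)^T,(0,0,1)^T$. So the work is to check the three families of $A$-inequalities at each vertex.

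The key observation I would use is that the three pairwise sums appearing in the $A$-inequalities encode the three non-trivial homomorphisms $\phi: \bZ_2\times\bZ_2 \to \bZ_2$. Inspecting the map $g$ in Definition~\ref{thm:K3vert}, one checks that for a vertex
\[
x_{1j}+x_{2j} = \phi_3(g_j), \quad x_{1j}+x_{3j} = \phi_1(g_j), \quad x_{2j}+x_{3j}=\phi_2(g_j),
\]
where $\phi_1(a,b)=a$, $\phi_2(a,b)=b$, $\phi_3(a,b)=a+b$. Since the vertex corresponds to elements $g_1,\dots,g_m$ of $\bZ_2\times\bZ_2$ summing to the identity, applying any homomorphism $\phi$ shows that $\sum_j \phi(g_j)=0$ in $\bZ_2$. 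Hence each of the three $m$-tuples $(x_{1j}+x_{2j})_j$, $(x_{1j}+x_{3j})_j$, $(x_{2j}+x_{3j})_j$ is a $\{0,1\}$-vector with an even number of ones.

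I would then invoke the demihypercube description: every $\{0,1\}$-vector of even weight is a vertex of $DH(m)$, hence satisfies the inequality $\sum_{i\in A} d_i \le |A|-1+\sum_{j\notin A}d_j$ for every odd-cardinality $A\subseteq\{1,\dots,m\}$. Applied to each of the three pairwise-sum vectors, this yields exactly the three families of $A$-inequalities in Definition~\ref{def:delta}. So every vertex of $K(m)$ lies in $\Delta(m)$, completing the proof. There is no real obstacle here; the content of the argument is the identification of the pairwise sums with images of the non-trivial characters of $\bZ_2\times\bZ_2$, which reduces each family of $A$-inequalities to the known demihypercube inequalities. A brief sanity check (that a $\{0,1\}$-vector of even weight genuinely satisfies the demihypercube inequality for odd $|A|$: when $\sum_{i\in A}d_i=|A|$, evenness forces $\sum_{j\notin A}d_j\ge 1$, and otherwise $\sum_{i\in A}d_i\le|A|-1$ already) is the only calculation I would record explicitly.
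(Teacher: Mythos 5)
Your proof is correct, and its core idea---that the three pairwise sums $x_{1j}+x_{2j}$, $x_{1j}+x_{3j}$, $x_{2j}+x_{3j}$ record the images of $g_j$ under the three non-trivial homomorphisms $\bZ_2\times\bZ_2\to\bZ_2$---is exactly the motivation the paper states just before introducing $\Delta(m)$. The execution differs slightly: the paper verifies each $A$-inequality by a direct two-case parity analysis on the sets $X=\{k\in A: g_k=(1,0)\}$ and $Y=\{k\in A: g_k=(0,1)\}$, arguing in the odd-parity case that cancelling the residual group element forces a contribution of $1$ from outside $A$; you instead observe that each pairwise-sum vector is a $\{0,1\}$-vector of even weight and then quote the demihypercube H-representation, which packages the same parity argument into a single known fact. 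Your route is a bit cleaner and makes the reduction of each family of $A$-inequalities to $DH(m)$ explicit, at the cost of relying on the $DH(m)$ facet description as a black box (which your sanity check at the end in fact reproves, so nothing is lost). Both arguments are complete and essentially equivalent in content.
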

\begin{proof}
Let $v$ be a vertex of $K(m)$ corresponding to a choice $(g_1,g_2,\cdots, g_m)$ of elements of $\bZ_2 \times \bZ_2$ which sum to the identity. By the identification in Definition~\protect\ref{thm:K3vert}, $v$ satisfies $x_{ij} \ge 0$ and $ \sum_{i=1}^3 x_{ij} \le 1$ for all $1 \le j \le m$. 

Without loss of generality, we prove that $v$ satisfies the A-inequality \[\sum_{j \in A}( x_{1j} + x_{2j}) \le |A|-1+ \sum_{j \not\in A}(x_{1j} + x_{2j}).\]

Letting $X=\{k\in A |g_k = (1,0)\}$ and $Y=\{k\in A |g_k = (0,1)\}$, we have ${|X|+|Y| \le |A|}$. We divide the proof into two cases according to the parity of $|X|$ and $|Y|$. 

If $ |X|$ and $|Y|$ are of the same parity, then $|X|+|Y|\neq |A|$, since $|A|$ is odd. Therefore,
\begin{eqnarray*}
\sum_{j \in A}( x_{1j} + x_{2j})& = & |X|+|Y| \\ 
  & \le & |A|-1 \\ 
 & \le &|A|-1+ \sum_{j \not\in A}(x_{1j} + x_{2j}).
\end{eqnarray*}

If $ |X|$ and $|Y|$ are of opposite parity, then, without loss of generality, we assume $|X|$ is odd. This implies \[\sum_{i \in X \cup Y} g_i = (1,0). \] 

In order to neutralize $(1,0)$ in $\bZ_2 \times \bZ_2$, we must either add $(1,0)$, or both $(1,1) $ and $(0,1)$ from an element(s) indexed by $A'$. In either case there must exists an $l \in A'$ such that $x_{1l}+x_{2l}=1$. Therefore, $\displaystyle\sum_{j \in A}( x_{1j} + x_{2j}) \le |A|-1+ \displaystyle\sum_{j \not\in A}(x_{1j} + x_{2j})$.

Since $v$ satisfies all of the defining inequalities of $\Delta(m)$, we have $v\in \Delta(m)$. Containment of $K(m)$ in $\Delta(m)$ follows from the convexity of $K(m)$.
\end{proof}

The opposite inclusion, that $\Delta(m) \subseteq K(m)$, is not immediately clear. However, the following shows that the inclusion holds as long as $\Delta(m)$ is integral.

\begin{theorem}
\label{thm:intimpliesans}
If $\Delta(m)$ is integral, then $\Delta(m)=K(m)$.
\end{theorem}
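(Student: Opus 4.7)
The plan is to establish the reverse inclusion $\Delta(m) \subseteq K(m)$, since Theorem~\ref{thm:KinDelta} already gives $K(m) \subseteq \Delta(m)$. Assuming $\Delta(m)$ is integral, it is the convex hull of its integer vertices, so it suffices to show that every integer point of $\Delta(m)$ arises as a vertex of $K(m)$ under the identification of Definition~\ref{thm:K3vert}.

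First, let $v$ be an integer point of $\Delta(m)$, arranged as a $3 \times m$ matrix. The constraints $x_{ij} \ge 0$ and $\sum_{i=1}^{3} x_{ij} \le 1$ restrict each column of $v$ to one of the four vectors $(0,0,0), (1,0,0), (0,1,0), (0,0,1)$, which are precisely the images of $\bZ_2 \times \bZ_2$ under $g$. Thus $v$ corresponds to a tuple $(g_1, \ldots, g_m) \in (\bZ_2 \times \bZ_2)^m$, and the task reduces to proving $g_1 + \cdots + g_m = (0,0)$.

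This sum vanishes if and only if each of the three nontrivial homomorphisms $\phi : \bZ_2 \times \bZ_2 \to \bZ_2$ sends it to $0$. A direct check against $g$ shows that for the three homomorphisms, whose kernels are generated by $(0,1), (1,0), (1,1)$ respectively, one has $\phi(g_j)$ equal to $x_{1j}+x_{3j}$, $x_{2j}+x_{3j}$, or $x_{1j}+x_{2j}$. Consequently each of the three families of $A$-inequalities in Definition~\ref{def:delta} is exactly the H-representation of the demihypercube $DH(m)$ applied to the corresponding $0/1$ vector $(\phi(g_1), \ldots, \phi(g_m))$. The key elementary observation is that a $0/1$ vector lies in $DH(m)$ if and only if its coordinate sum is even: if the sum were odd, then taking $A$ to be its support would yield $|A| \le |A|-1$, a contradiction. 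This forces $\sum_{j} \phi(g_j) = 0$ in $\bZ_2$ for all three homomorphisms, hence $\sum_{j} g_j = (0,0)$, so $v$ is a vertex of $K(m)$.

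The main conceptual step, and essentially the only one requiring any thought, is identifying the three $A$-inequality families as pullbacks of the single $\bZ_2$ demihypercube H-representation along the three nontrivial homomorphisms from $\bZ_2 \times \bZ_2$ to $\bZ_2$. Once this correspondence is in hand, the argument collapses to a parity check via the known H-representation of $DH(m)$, and the real work of the paper lies not here but in establishing the integrality hypothesis itself.
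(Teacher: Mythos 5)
Your proof is correct and takes essentially the same route as the paper: both arguments come down to the fact that an integral point of $\Delta(m)$ whose associated group elements do not sum to the identity violates an $A$-inequality at an odd-cardinality support set. The paper finds that set by pairing identical group elements and exhibiting one violated inequality directly, while you package the identical parity obstruction via the three homomorphisms $\bZ_2\times\bZ_2\to\bZ_2$ and the $DH(m)$ inequalities (the viewpoint the paper itself uses as motivation), a difference of presentation rather than substance.
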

\begin{proof}
Let $v$ be an integral vertex of $\Delta(m)$ which is not a vertex of $K(m)$. Using Definition~\protect\ref{thm:K3vert}, we can identify $v$ with a sequence of group elements $g_{1},\dots,g_{m}$. Since $v$ is not a vertex of $K(m)$, then $\displaystyle\sum_{k=1}^{m}g_{k}\neq (0,0)$. 

Where possible, we pair each nontrivial element of $g_{1},\dots,g_{m}$ with an identical group element. Since the sum is not the identity, there must be one remaining nontrivial element $g_{k}$ or a pair of distinct nontrivial elements $g_{k}$ and $g'_{k}$. Without loss of generality, we assume $g_{k}=(1,0)$ and, if an additional remaining element exists, that $g'_{k}=(0,1)$.

It follows that $A=\{l| g_{l}=(1,0)$ or $(1,1)\}$ has odd cardinality. By construction, we have $\displaystyle\sum_{j\in A}x_{1j}+x_{3j}=|A|$ and $\displaystyle\sum_{j\notin A}x_{1j}+x_{3j}=0$. This contradicts the hypothesis that $v$ was an element of $\Delta(m)$ since $v$ does not satisfy
\[\displaystyle\sum_{i\in A}(x_{1j}+x_{3j})\leq|A|-1+\displaystyle\sum_{j\notin A}(x_{1j}+x_{3j}).\] Therefore, all integral vertices of $\Delta(m)$ are also vertices of $K(m)$. By convexity this shows that if $\Delta(m)$ is integral, then it is a subset of $K(m)$. Combining this with Theorem~\protect\ref{thm:KinDelta} yields the equality of polytopes.
\end{proof}

We have thus exchanged the NP-complete problem of converting a V-representation to an H-representation with another NP-complete problem: recognizing when a rational polytope is integral \protect\cite{integralnphard}. However, the relationship between the $A$-inequalities of $\Delta(m)$ and those of the demihypercube suggest a change of coordinates that allow us to demonstrate integrality.

\section{An Alternative Description of the {Kimura-3} Polytope}
\label{sec:alternative}
\subsection{Coordinate Change from $\Delta (m)$ to $\Delta' (m)$}
\label{sec:deltaprime}
The $A$-inequalities for the demihypercube and for $\Delta(m)$ share the same underlying indexing structure. However, a more subtle connection with the demihypercube is revealed after a change of coordinates motivated by the group homomorphisms from $\bZ_2 \times \bZ_2 \rightarrow \bZ_2$. 

\begin{definition}
\label{def:map}
Define a map $f:\bR^{3m} \rightarrow \bR^{3m}$ by $f(M)=M'$, where $M'$ is a $3 \times m$ matrix with $x'_{1j}=x_{1j}+x_{2j}$, $x'_{2j}=x_{1j}+x_{3j}$ and $x'_{3j}=x_{2j}+x_{3j}$ for $1 \le j \le m$.
\end{definition}

%The image of $f$ provides suitable coordinates for constructing the appropriate H-representation of the {Kimura-3} polytope.

\begin{definition}
\label{def:primes}
Define the polytope $\Delta'(m)=f(\Delta(m))$. 
\end{definition}

The function $f$ is not an isomorphism of the underlying integer lattices. However, the following computations show $\Delta(m)$ and $\Delta'(m)$ are isomorphic, and that they are either both integral or both non-integral polytopes.
\begin{lemma}
\label{thm:isomorphism}
Let $f_j:\bR^3 \rightarrow \bR^3$ be the restriction of $f$ to map from column $j$ of the matrix $M$ to column $j$ of the matrix $M'$. Then $f_j$ is an isomorphism which maps the unit 3-simplex in $\bR^3$ to $DH(3)$.
\end{lemma}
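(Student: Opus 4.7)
The plan is to verify the lemma by direct linear algebra, since $f_j$ is explicitly a linear map from $\bR^3$ to $\bR^3$. The unit $3$-simplex is the set $\{(x_1,x_2,x_3) : x_i \ge 0,\ x_1+x_2+x_3 \le 1\}$, and by enumerating the odd-cardinality subsets of $\{1,2,3\}$ (namely $\{1\},\{2\},\{3\},\{1,2,3\}$), the defining inequalities of $DH(3)$ reduce to the four conditions $d_i \le d_j + d_k$ for each permutation $\{i,j,k\}=\{1,2,3\}$ together with $d_1+d_2+d_3 \le 2$ (plus $0 \le d_i \le 1$).

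First I would record the matrix representing $f_j$, namely
\[
A = \begin{pmatrix} 1 & 1 & 0 \\ 1 & 0 & 1 \\ 0 & 1 & 1 \end{pmatrix},
\]
and compute $\det A = -2 \ne 0$, so $f_j$ is a linear isomorphism of $\bR^3$. One can even write down the inverse explicitly,
\[
A^{-1} = \tfrac{1}{2}\begin{pmatrix} 1 & 1 & -1 \\ 1 & -1 & 1 \\ -1 & 1 & 1 \end{pmatrix},
\]
which will be needed to check the backward containment.

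Next I would verify $f_j$ sends the simplex into $DH(3)$. Nonnegativity of $x'_i$ is immediate from nonnegativity of the $x_i$; the bound $x'_i \le 1$ follows from $x_1+x_2+x_3 \le 1$. The inequality $x'_1 \le x'_2 + x'_3$ unfolds to $x_1+x_2 \le x_1 + 2x_3 + x_2$, i.e. $0 \le 2x_3$, and cyclically for the other two. Finally $x'_1 + x'_2 + x'_3 = 2(x_1+x_2+x_3) \le 2$. For the reverse containment, take any $(x'_1,x'_2,x'_3) \in DH(3)$ and define $(x_1,x_2,x_3) = A^{-1}(x'_1,x'_2,x'_3)$; then $x_i \ge 0$ is exactly the $DH(3)$ inequality $x'_i \le x'_j + x'_k$ for the appropriate permutation, and $x_1+x_2+x_3 = \tfrac{1}{2}(x'_1+x'_2+x'_3) \le 1$ is exactly the $A=\{1,2,3\}$ inequality.

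Because $f_j$ is a bijection on $\bR^3$, these two containments yield $f_j(\text{simplex}) = DH(3)$ as claimed. There is no real obstacle here; the task is essentially bookkeeping, with the only mildly delicate point being that each of the four defining inequalities of $DH(3)$ corresponds precisely to one of the four defining inequalities of the simplex under the change of variables, which is what makes $f_j$ an honest isomorphism of the two polytopes rather than merely a surjection.
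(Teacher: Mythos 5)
Your proof is correct, and it takes a genuinely different route from the paper's. The paper argues on the V-representation side: it lists the four vertices $(0,0,0),(1,0,0),(0,1,0),(0,0,1)$ of the unit $3$-simplex, exhibits the explicit inverse of $f_j$ to establish that it is a linear isomorphism, and then simply pushes the vertices forward, relying on the fact that a linear isomorphism carries a convex hull to the convex hull of the image points. You instead work entirely on the H-representation side, matching the four defining inequalities of the simplex ($x_i\ge 0$ and $x_1+x_2+x_3\le 1$) one-for-one with the four odd-cardinality $A$-inequalities of $DH(3)$ under the change of variables. Both arguments are sound, and your inverse matrix agrees with the paper's formula $f_j^{-1}(x,y,z)=\left(\frac{x+y-z}{2},\frac{x+z-y}{2},\frac{y+z-x}{2}\right)$. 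Your version has the minor advantage of engaging directly with the H-representation of $DH(3)$ as it is actually defined in the paper's Example 2.1, whereas the paper's vertex argument tacitly uses that the convex hull of $(0,0,0),(1,1,0),(1,0,1),(0,1,1)$ coincides with that inequality description --- a standard fact about the demihypercube, but one your computation verifies rather than assumes. The inequality-matching viewpoint also foreshadows the row/column facet bookkeeping used later in the paper.
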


\begin{proof}
The vertices of the unit 3-simplex are \{(0,0,0),(1,0,0),(0,1,0),(0,0,1)\}. The map $f_j:\bR^3 \rightarrow \bR^3$ is given by $f_j(x_{ij})=(x_{1j}+x_{2j},x_{1j}+x_{3j},x_{2j}+x_{3j})$. The function $f_j$ is an isomorphism with inverse \[f_j^{-1}(x,y,z)=(\displaystyle\frac{x+y-z}{2},\displaystyle\frac{x+z-y}{2},\displaystyle\frac{y+z-x}{2}).\] The second part of the claim follows from applying the change of coordinates to the vertices of the 3-simplex.
\end{proof}

\begin{corollary} 
\label{cor:isomorphic} The polytopes $\Delta(m)$ and $\Delta'(m)$ are isomorphic. 
\end{corollary}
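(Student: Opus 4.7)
The plan is to package Lemma~\ref{thm:isomorphism} column-by-column into a statement about the full ambient space $\bR^{3m}$. The map $f$ of Definition~\ref{def:map} acts block-diagonally on the $3\times m$ matrix $M$: applying $f$ is nothing more than applying the column map $f_j$ independently to each of the $m$ columns. So I would begin by observing that $f = f_1 \oplus f_2 \oplus \cdots \oplus f_m$ under the identification $\bR^{3m} \cong \bR^3 \oplus \cdots \oplus \bR^3$.

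Next, I would invoke Lemma~\ref{thm:isomorphism}, which asserts that each $f_j$ is a (linear) isomorphism with the explicit inverse
\[
f_j^{-1}(x,y,z)=\left(\frac{x+y-z}{2},\ \frac{x+z-y}{2},\ \frac{y+z-x}{2}\right).
\]
The direct sum of linear isomorphisms is a linear isomorphism, so $f$ itself is a linear isomorphism of $\bR^{3m}$, with inverse $f^{-1} = f_1^{-1} \oplus \cdots \oplus f_m^{-1}$.

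Finally, since $\Delta'(m)$ is defined as $f(\Delta(m))$, and $f$ is a linear bijection of the ambient space, the restriction $f\colon \Delta(m)\to \Delta'(m)$ is a bijective affine map with affine inverse $f^{-1}|_{\Delta'(m)}$. This is precisely what it means for two polytopes to be (affinely) isomorphic, giving the corollary.

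There is no real obstacle here: once Lemma~\ref{thm:isomorphism} is in hand, the argument is a one-line assembly. The only minor thing worth flagging is the implicit notion of ``isomorphic'' — I would clarify that throughout we mean affine/linear isomorphism of polytopes, so that the conclusion is immediate from the corresponding property of $f$.
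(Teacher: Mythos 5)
Your argument is correct and matches the paper's intent: the corollary is stated there without a written proof, as an immediate consequence of Lemma~\ref{thm:isomorphism}, since $f$ acts column-by-column and each column map $f_j$ is a linear isomorphism. Your explicit block-diagonal assembly of $f$ and $f^{-1}$ is exactly the argument the paper leaves implicit.
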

\begin{corollary} 
\label{cor:isomorphicint} The polytope $\Delta(m)$ is integral if and only if $\Delta'(m)$ is integral.
\end{corollary}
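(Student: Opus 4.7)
The plan is to exploit Corollary~\ref{cor:isomorphic}: since $f$ restricts to a bijection between $\Delta(m)$ and $\Delta'(m)$, it sends vertices to vertices, so integrality reduces to verifying that $v$ is an integer point if and only if $f(v)$ is, for each vertex $v$ of $\Delta(m)$. The forward direction is automatic, since the column-wise map $f_j(x_{1j},x_{2j},x_{3j}) = (x_{1j}+x_{2j}, x_{1j}+x_{3j}, x_{2j}+x_{3j})$ has integer coefficients and therefore sends $\bZ^{3m}$ into $\bZ^{3m}$.

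For the converse I would fix a vertex $v$ of $\Delta(m)$ whose image $v' = f(v)$ is integer, and argue column-by-column that $v$ itself is integer. The inverse formula from Lemma~\ref{thm:isomorphism} writes each $x_{ij}$ as a half-integer combination of the $x'_{kj}$, so a priori the preimage could carry denominators of $2$, and the whole task is to show that these denominators always cancel on the column of a vertex of $\Delta(m)$.

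Fix a column index $j$ and set $(a,b,c) := (x'_{1j}, x'_{2j}, x'_{3j}) \in \bZ^{3}$. The defining constraints $x_{ij}\geq 0$ and $\sum_i x_{ij}\leq 1$ of $\Delta(m)$ force each of $a, b, c$ to lie in $[0,1]$, and when combined with integrality this restricts the triple to $\{0,1\}^3$. A short case check then disposes of the remaining possibilities: the sum bound $a+b+c = 2(x_{1j}+x_{2j}+x_{3j}) \leq 2$ rules out $(1,1,1)$, while the non-negativity conditions $x_{ij}\geq 0$ rule out each of the weight-one triples $(1,0,0), (0,1,0), (0,0,1)$ (since the inverse formula would force some $x_{ij} = -1/2$). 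The four remaining triples pull back under $f_j^{-1}$ to the integer points $(0,0,0), (1,0,0), (0,1,0), (0,0,1)$, so $v$ is integer on column $j$; applying this to every column gives the conclusion.

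The main obstacle is the factor of $2$ in $f^{-1}$, which means integrality of $\Delta'(m)$ does not pull back to $\Delta(m)$ purely formally. The resolution, which is the only substantive step, is the observation that the column-wise simplex inequalities defining $\Delta(m)$ eliminate precisely those residue classes on which the $1/2$ in $f_j^{-1}$ would fail to clear.
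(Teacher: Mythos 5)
Your proposal is correct and rests on the same key observation as the paper: the column-wise simplex inequalities of $\Delta(m)$ are exactly what clears the factor of $2$ in $f_j^{-1}$. The paper packages the converse as a short contradiction (a single non-integral coordinate forces $x_{1j}=x_{2j}=x_{3j}=\tfrac{1}{2}$, violating $\sum_i x_{ij}\le 1$) rather than your enumeration of integer triples in $\{0,1\}^3$, but this is only a difference in bookkeeping.
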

\begin{proof}
It is clear that if $\Delta(m)$ is integral then $\Delta'(m)$ must also be integral. Now assume $P \in \Delta(m)$ is a non-integral vertex which maps to an integral vertex of $\Delta'(m)$. This implies $x_{1j}+x_{2j} =1$ and $x_{1j},x_{2j} > 0$. It follows that $x_{1j}+x_{3j} = 1$ and $x_{2j}+x_{3j}=1$ as well. Combining these three equations yields $x_{1j}=x_{2j}=x_{3j}=\frac{1}{2}$. However this implies $x_{1j}+x_{2j}+x_{3j} > 1$, so $P$ could not have been an element of $\Delta(m)$.
\end{proof}

\begin{corollary}
The facets of $\Delta'(m)$ are given by: \[\sum_{j \in A}(x'_{i,j} ) \le |A|-1+ \sum_{j \not\in A}x'_{i,j} \] where $A$ is any subset of $\{1,2,\cdots m\}$ of odd cardinality and $1\le i \le 3$, and \[\sum_{i \in B}(x'_{i,j} ) \le |B|-1+ \sum_{i \not\in B}x'_{i,j} \] where $B$ is a subset of $\{1,2,3\}$ of odd cardinality and $1 \le j \le m$.  We call the former \emph{row facets} and the later \emph{column facets}.
\end{corollary}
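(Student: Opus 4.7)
The plan is a direct verification via the coordinate change $f$ from Definition~\ref{def:map}. Since $\Delta'(m) = f(\Delta(m))$ and, by Lemma~\ref{thm:isomorphism}, each $f_j$ is a bijection with inverse
\[
f_j^{-1}(x,y,z) = \left(\tfrac{x+y-z}{2},\ \tfrac{x+z-y}{2},\ \tfrac{y+z-x}{2}\right),
\]
it suffices to push each defining inequality of $\Delta(m)$ (as listed in Definition~\ref{def:delta}) through the substitution $x_{ij} = f_j^{-1}(x'_{1j},x'_{2j},x'_{3j})_i$ and check that one obtains precisely the row and column inequalities in the statement.

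First I would handle the $A$-inequalities, which give the row facets. Since $x'_{1j} = x_{1j}+x_{2j}$, $x'_{2j} = x_{1j}+x_{3j}$, and $x'_{3j} = x_{2j}+x_{3j}$, the three families in Definition~\ref{def:delta} transform directly into
\[
\sum_{j\in A} x'_{ij} \le |A|-1 + \sum_{j\notin A} x'_{ij} \qquad (i=1,2,3),
\]
for each odd-cardinality $A \subseteq \{1,\ldots,m\}$, which are exactly the claimed row facets.

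Next I would handle the local (single-column) constraints, which yield the column facets. Applying the inverse, the three non-negativity constraints $x_{ij}\ge 0$ transform into $x'_{3j}\le x'_{1j}+x'_{2j}$, $x'_{2j}\le x'_{1j}+x'_{3j}$, and $x'_{1j}\le x'_{2j}+x'_{3j}$, matching the column inequality in the statement for $B=\{3\},\{2\},\{1\}$ (all odd cardinality one). Summing the three components of $f_j^{-1}$, the simplex constraint $\sum_i x_{ij}\le 1$ transforms into $x'_{1j}+x'_{2j}+x'_{3j}\le 2$, which is the column inequality for $B=\{1,2,3\}$ (odd cardinality three). Since $\{1\},\{2\},\{3\},\{1,2,3\}$ exhaust the odd subsets of $\{1,2,3\}$, all four local constraints per column are accounted for, and we have matched every defining inequality of $\Delta(m)$ with a row or column inequality and vice versa. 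The proof concludes by invoking Corollary~\ref{cor:isomorphic} to observe that because $f$ is an isomorphism, this common set of inequalities defines $\Delta'(m)$.

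There is no real obstacle here; the content is a routine substitution. The only point requiring any care is bookkeeping: one must verify that exactly the four odd subsets of $\{1,2,3\}$ arise from the four per-column constraints of $\Delta(m)$ (three non-negativities plus the simplex constraint), and that the three different $A$-inequality families of Definition~\ref{def:delta} align with the three row indices $i=1,2,3$ via the definitions $x'_{ij}=x_{kj}+x_{lj}$.
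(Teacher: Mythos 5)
Your proposal is correct and is essentially the argument the paper leaves implicit: the corollary is stated without proof, and the intended justification is exactly your substitution of $f_j^{-1}$ into the defining inequalities of $\Delta(m)$, under which the three $A$-inequality families become the three row families and the four per-column constraints (three non-negativities plus the simplex bound) become the four odd subsets $B\subseteq\{1,2,3\}$. The only point neither you nor the paper addresses is that calling these inequalities \emph{facets} strictly requires checking irredundancy, but only the H-description is used in the sequel, so this does not affect anything downstream.
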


\subsection{Psuedo-Demihypercubes}
\label{subsec:psuedo}
For a point $P$ in $\Delta'(m)$, we examine the connection between the number of integral coordinates in a row (resp. column) of $P$ and the number of row facets (resp. column facets) that $P$ lies on.

To explain this connection, we restrict our attention to the coordinates of a particular row of $m'$. Let $P|_r$ be the canonical projection of $P$ into $\bR^m$ corresponding to the coordinates of row $r$ of the matrix. We introduce the notion of a pseudo-demihypercube to describe the projection of $\Delta'(m)$ onto a particular row space.

\begin{definition}
For any row $r$, the \emph{psuedo-demihypercube} is defined by \[PDH(m) = \{P|_r:P \in \Delta'(m)\} \subseteq \bR^m.\] The hyperplanes corresponding to the row facets define \emph{psuedo-facets} of the polytope.
\end{definition}

We do not assume that $PDH(m)=DH(m)$ but we do make use of the observation that the coordinates of points in $PDH(m)$ must lie between zero and one.
%\begin{rem} The psuedo-facets form a proper subset of the facets of $PDH(m)$ when $m>3$. In particular, Theorem~\protect\ref{lem:threeplanes} would not make sense in context of facets of $PDH(m)$.
%\end{rem}
\subsection{Integrality of Coordinates in the Psuedo-demihypercube}
\label{subsec:rowdemihypercube}
In this subsection we demonstrate a positive correlation between the number of integral coordinates of a point $P\in PDH(m)$ and the number of pseudo-facets $P$ lies on.

\begin{theorem}
\label{thm:twononinteger}
Every non-integral point $P$ of $PDH(m)$ has at least two non-integral coordinates. 
\end{theorem}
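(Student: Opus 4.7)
The plan is to argue by contradiction. Suppose $P = (p_1, \dots, p_m) \in PDH(m)$ has exactly one non-integral coordinate. Since every coordinate lies in $[0,1]$, I may assume $p_k = t$ for some $t \in (0,1)$ and $p_j \in \{0,1\}$ for all $j \neq k$. I then want to exhibit an odd-cardinality subset $A \subseteq \{1,\dots,m\}$ whose corresponding pseudo-facet inequality is violated by $P$, contradicting $P \in PDH(m)$.

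To set up the case split, let $S = \{j \neq k : p_j = 1\}$, so that $|S| \le m-1$. I would consider the two cases based on the parity of $|S|$. If $|S|$ is even, take $A = S \cup \{k\}$, which has odd cardinality $|S|+1 \le m$. Then the pseudo-facet inequality for $A$ reads
\[
t + |S| \;=\; \sum_{j \in A} p_j \;\le\; |A| - 1 + \sum_{j \notin A} p_j \;=\; |S| + 0,
\]
forcing $t \le 0$, a contradiction. If instead $|S|$ is odd, take $A = S$ itself, which is a nonempty odd-cardinality subset not containing $k$. Then the pseudo-facet inequality becomes
\[
|S| \;=\; \sum_{j \in A} p_j \;\le\; |A| - 1 + \sum_{j \notin A} p_j \;=\; |S| - 1 + t,
\]
forcing $t \ge 1$, again a contradiction.

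In both cases the contradiction comes from the fact that when $P$ has only one non-integral coordinate, the rounding-up (or rounding-down) behavior of that single fractional entry cannot be hidden inside the slack $|A|-1$ of any demihypercube-type inequality whose parity is correctly aligned with $|S|$. I do not need to know anything about $PDH(m)$ beyond the containment in $[0,1]^m$ and the pseudo-facet inequalities. The only real subtlety is ensuring the chosen $A$ has odd cardinality and is a valid subset of $\{1,\dots,m\}$; the case split on the parity of $|S|$ handles this uniformly, and the edge case $|S|=0$ falls under the even case with $A = \{k\}$, which is allowed. I expect no serious obstacle beyond making the parity argument precise.
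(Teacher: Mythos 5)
Your proof is correct and is essentially the same as the paper's: the paper also splits on the parity of the set of coordinates equal to $1$, using $A = I$ in the odd case (forcing $p_k \ge 1$) and $A = I \cup \{k\}$ in the even case (forcing $p_k \le 0$). Your write-up is, if anything, slightly more careful about why the chosen $A$ is a valid odd-cardinality subset and about the edge case $|S| = 0$.
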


\begin{proof}
Let $P$ be a non-integral point of $PDH(m)$ with a single non-integral coordinate $p_k$, and let $I = \{i_1,i_2,\cdots,i_{l}\}$ denote the indices of coordinates of $P$ where $p_i=1$. The proof that there is a second non-integral coordinate is broken into cases based on the parity $|I|$. 

If $|I|$ is odd, then the $A$-inequality corresponding to $I$ is $|I| \le |I|-1+p_k$. It follows that $p_k \ge 1$, which contradicts the existence of a non-integral coordinate $P \in PDH(m)$.

If $|I|$ is even then let $A=I \cup \{i_k\}$. Then the $A$-inequality is $|I|+p_k \le |I|$ so $p_k \le 0$. This contradicts $p_k$ being a non-integral coordinate of $P \in PDH(m)$. Consequently $P$ cannot have exactly one non-integral coordinate.
\end{proof}

If $P$ has at at least two non-integral coordinates and lies on two psuedo-facets, then it cannot have any additional non-integral coordinates.

\begin{theorem}
\label{thm:twoplanes}
If a point $P$ lies on two pseudo-facets of $PDH(m)$, then $P$ has at most two  non-integral coordinates. 
\end{theorem}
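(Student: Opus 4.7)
The plan is to suppose, for contradiction, that $P$ lies on two pseudo-facets indexed by distinct odd subsets $A, B \subseteq \{1, \dots, m\}$ but has three or more non-integer coordinates, and to derive a contradiction from the two resulting linear equations together with the constraint $p_j \in (0,1)$ on the non-integer coordinates.

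First I would rewrite each tight $A$-facet equation in the equivalent form $\sum_{j \in A}(1 - p_j) + \sum_{j \notin A} p_j = 1$, exhibiting the left-hand side as a sum of terms in $[0,1]$. Every non-integer coordinate contributes a strictly positive amount, so if we let $J$ be the set of non-integer indices and $T = \{j \notin J : p_j = 1\}$, the existence of any $j \in J$ forces each integer coordinate to contribute $0$; this pins down $A \cap J^c = T$, and by the same reasoning $B \cap J^c = T$. Writing $J_A = A \cap J$ and $J_B = B \cap J$, the hypothesis $A \ne B$ becomes $J_A \ne J_B$, and since $|A| = |J_A| + |T|$ is odd, both $|J_A|$ and $|J_B|$ share the common parity $1 - |T| \pmod 2$.

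Next I would reduce the two equations to their $J$-parts. Expanding $\sum_{j \in J_A}(1 - p_j) + \sum_{j \in J \setminus J_A} p_j = 1$ with $S = \sum_{j \in J} p_j$ and $S_A = \sum_{j \in J_A} p_j$ yields $2S_A = |J_A| + S - 1$, and likewise $2S_B = |J_B| + S - 1$. Adding the two equations and applying inclusion--exclusion with $S_{AB} = \sum_{j \in J_A \cap J_B} p_j$ and $S_0 = \sum_{j \in J \setminus (J_A \cup J_B)} p_j$ delivers the key identity
$$S_{AB} \;=\; S_0 + \frac{|J_A| + |J_B|}{2} - 1,$$
where $(|J_A|+|J_B|)/2$ is an integer by the parity remark.

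Finally, I would exploit $p_j \in (0,1)$ on $J$. When $J_A \cap J_B \ne \emptyset$ this gives $S_{AB} < |J_A \cap J_B|$; combined with $S_0 \ge 0$ the identity then forces $|J_A \cap J_B| > (|J_A|+|J_B|)/2 - 1$, hence $|J_A \cap J_B| \ge (|J_A|+|J_B|)/2$ by integrality. The reverse bound $|J_A \cap J_B| \le \min(|J_A|, |J_B|) \le (|J_A|+|J_B|)/2$ collapses the chain, forcing $J_A = J_B$ against $A \ne B$. The edge case $J_A \cap J_B = \emptyset$ needs a separate sweep: the identity simplifies to $S_0 = 1 - (|J_A|+|J_B|)/2$, whose nonnegativity combined with $J_A \ne J_B$ and the common parity forces $|J_A| = |J_B| = 1$ and $S_0 = 0$; since each $p_j$ on $J$ is strictly positive, $S_0 = 0$ means $J = J_A \cup J_B$, so $|J| = 2$, again contradicting $|J| \ge 3$. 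The main subtlety is the parity bookkeeping that promotes a strict inequality into an integer-valued gain, and it is secured by the opening step identifying $A \cap J^c = B \cap J^c = T$, which lets $|J_A|$ and $|J_B|$ inherit a common parity from the odd-cardinality hypothesis.
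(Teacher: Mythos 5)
Your proof is correct and at its core follows the same route as the paper: both arguments add the two tight facet equations and use $p_j\in[0,1]$ to squeeze, forcing every coordinate outside the symmetric difference of the two index sets to be integral (your bookkeeping with $J$, $T$, $S_A$, $S_B$, $S_{AB}$, $S_0$ is a reparametrization of the paper's direct bound $2\sum_{A\cap B}p_j \le 2|A\cap B| \le$ RHS). One tiny inaccuracy: in the disjoint case the parity argument does not force $|J_A|=|J_B|=1$ (the split $\{|J_A|,|J_B|\}=\{0,2\}$ also satisfies all your constraints), but that subcase likewise yields $S_0=0$ and hence $|J|=|J_A\cup J_B|=2$, so the contradiction with $|J|\ge 3$ still goes through.
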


\begin{proof}
Assume $P$ is a point in $PDH(m)$ which lies on facets $H_{A}$ and $H_{B}$ corresponding to index sets $A$ and $B$. We let $p_i$ denote the coordinates of $P$ indexed by $i \in \{1,2, \cdots, m \}$.

Then, 
\[ H_{A}: \displaystyle\sum_{i\in (A \backslash B)} p_i+\displaystyle\sum_{j\in (A \cap B)} p_j =|A \backslash B| +| A \cap B | -1 +\displaystyle\sum_{k\in (B \backslash A)} p_k+\displaystyle\sum_{l \in (A \cup B)'}p_l,\]
and
\[
H_{B}:\displaystyle\sum_{k\in (B \backslash A)} p_k+\displaystyle\sum_{j\in (A \cap B)} p_j =|B \backslash A| +| A \cap B | -1 +\displaystyle\sum_{i\in (A \backslash B)} p_i +\displaystyle\sum_{l \in (A \cup B)'}p_l.
\]
Combining these two conditions gives the following equation:
\[
2\displaystyle\sum_{j\in (A \cap B)} p_j=|A \backslash B|+| B \backslash A| +2| A \cap B| -2 +2\displaystyle\sum_{l \in (A \cup B)'}p_l.
\]
Since $A$ and $B$ are distinct sets of odd cardinality, it follows that $|A \backslash B| + |B \backslash A| \ge 2$.
In order for the above equation to hold for a point $P \in PDH(m)$, we must have $p_j=1$ for all $j\in A \cap B$, and $p_l=0$ for all $l \in (A \cup B)'$, and $|A \backslash B| + |B \backslash A|=2$. Consequently, there are at most two non-integral coordinates and they would have to be indexed by the two elements of $(A \backslash B) \cup (B \backslash A)$.
\end{proof}

If a point lies on three pseudo-facets, the correlation is stronger as all coordinates are forced to be integral. 

\begin{theorem}
\label{lem:threeplanes}
Let $P$ be a point in $PDH(m)$. If $P$ lies on three pseudo-facets, then $P$ is integral and lies on $m$ pseudo-facets.
\end{theorem}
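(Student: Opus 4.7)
The plan is to apply Theorem~\ref{thm:twoplanes} to each pair among the three pseudo-facets containing $P$, deduce integrality from a short parity argument modulo $2$, and then enumerate the tight pseudo-facets at the resulting $0/1$-vector.

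Invoking Theorem~\ref{thm:twoplanes} on each of the pairs $\{H_A,H_B\}$, $\{H_A,H_C\}$, and $\{H_B,H_C\}$ shows that the three symmetric differences $A\triangle B$, $A\triangle C$, and $B\triangle C$ each have cardinality exactly $2$ and that every non-integral coordinate of $P$ is indexed by an element of the corresponding two-element set. Hence any non-integral coordinate is indexed by some $i\in(A\triangle B)\cap(A\triangle C)\cap(B\triangle C)$. I would then show this intersection is empty: such an $i$ would force the indicator values $\chi_A(i),\chi_B(i),\chi_C(i)\in\{0,1\}$ to satisfy $\chi_A(i)+\chi_B(i)\equiv\chi_A(i)+\chi_C(i)\equiv\chi_B(i)+\chi_C(i)\equiv 1\pmod 2$, and summing the three congruences gives $2(\chi_A(i)+\chi_B(i)+\chi_C(i))\equiv 3\pmod 2$, i.e.\ $0\equiv 1\pmod 2$, a contradiction. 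Hence $P$ has no non-integral coordinate and, since coordinates lie in $[0,1]$, $P$ is a $0/1$-vector.

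To count the pseudo-facets through $P$, let $S=\{i:p_i=1\}$. The $A'$-inequality is tight at $P$ if and only if $2|A'\cap S|=|A'|+|S|-1$; tightness of any of the original facets $H_A,H_B,H_C$ (each with odd cardinality index set) then forces $|S|$ to be even. Writing $k=|A'|$ and $j=|A'\cap S|$, the constraints $k\ge 1$, $j\le|S|$, and $k-j\ge 0$ force $j\in\{|S|-1,|S|\}$. The case $j=|S|-1$ contributes the $|S|$ odd subsets $A'\subseteq S$ of size $|S|-1$, and the case $j=|S|$ contributes the $m-|S|$ odd subsets $A'\supseteq S$ of size $|S|+1$, for a total of $m$ tight pseudo-facets. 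The edge cases $|S|=0$ and $|S|=m$ (the latter requiring $m$ even) yield the same count by the same bookkeeping.

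The conceptual heart of the proof is the parity cancellation: two pseudo-facets alone cannot eliminate non-integral coordinates (per Theorem~\ref{thm:twoplanes}), but a third facet furnishes exactly the extra indicator equation needed to make the system inconsistent modulo $2$. Once integrality is in hand, the final enumeration of tight pseudo-facets is a routine count.
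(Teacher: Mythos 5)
Your proof is correct and follows essentially the same route as the paper: pairwise application of Theorem~\ref{thm:twoplanes} to the three facets forces integrality, and then one enumerates the $m$ tight pseudo-facets as the odd sets $A'\subseteq S$ with $|A'|=|S|-1$ and $A'\supseteq S$ with $|A'|=|S|+1$. Your explicit mod-$2$ argument showing $(A\triangle B)\cap(A\triangle C)\cap(B\triangle C)=\emptyset$ is a welcome clarification of a step the paper leaves implicit, and your tightness equation $2|A'\cap S|=|A'|+|S|-1$ additionally shows these $m$ facets are the only ones through $P$.
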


\begin{proof}
Assume $P$ lies on pseudo-facets of $PDH(m)$ corresponding to odd cardinality sets $A$, $B$ and $C$. Repeated application of Theorem~\protect\ref{thm:twoplanes} yields: \[p_i=\left\{ \begin{array}{ll} 
1 & i \in \{A\cap B\} \cup \{A\cap C\} \cup \{B\cap C\}\\ 
0 & \mbox{otherwise} 
\end{array} \right\}. \]

%$x_i=1$ if $i$ to the pairwise intersections of $A$, $B$ and $C$ implies all coordinates indexed by elements in the intersection of two or more of $A$, $B$ and $C$ must be one. All other coordinates must be zero.
 
Notice the integral point $P$ cannot have an odd number of coordinates with the value one or it would not satisfy the $A$-inequality where $I = \{i|p_i=1\}$. So, we may let $I = \{i_1,i_2,\cdots,i_{2k}\}$ denote the indices of coordinates of $P$ where $p_i=1$, and $J=\{j_{2k+1},\cdots,j_m\}$ denote the indices of coordinates where $p_j=0$.

The result follows from checking that $P$ lies on the $m$ psuedo-facets corresponding the sets $I \backslash \{i_l\}$ for $1 \le l \le 2k$, and $I \cup \{i_l\}$ for $2k+1 \le l \le m$.
\end{proof}

In summary, if a point in a pseudo-demihypercube lies on three or more psuedo-facets facets then it must be integral. If it lies on exactly two facets then it must have exactly two non-integral coordinates. Moreover, no point in the pseudo-demihypercube can have exactly one non-integral coordinate.

In addition to this structure, the results in this section demonstrate that the number of rows or columns which contain a non-integral coordinate is constrained by the total number of non-integral coordinates.  We introduce the following notation to make this precise.

\begin{definition}
Let $P$ be a point in $\Delta'(m)$. We define $k(P)$ as the number of non-integral coordinates in $P$ and $\omega(P)$ as the sum of the number of rows and columns of the matrix representation of $P$ which contain a non-integral coordinate.
\end{definition}

\begin{corollary}
\label{cor:cases28}
If $P \in \Delta'(m)$, then $k(P)\geq \omega(P)$ with equality met only when $P$ lies on exactly two row-facets (resp. column-facets) of each row (resp. column) containing a non-integral coordinate.
\end{corollary}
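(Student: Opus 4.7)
The plan is to use a double-counting argument on an auxiliary bipartite graph, followed by a short case analysis to pin down the equality case.

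Let $r$ and $c$ denote respectively the number of non-integral rows and columns of the matrix representation of $P$, so $\omega(P)=r+c$. For each non-integral row, Theorem~\ref{thm:twononinteger} applied to the row projection $P|_r$ into $PDH(m)$ gives at least two non-integral coordinates in that row. For each non-integral column, observe that the column lies in $DH(3)$ (cut out by the column facets of $\Delta'(m)$), and the parity argument of Theorem~\ref{thm:twononinteger} adapted to odd subsets of $\{1,2,3\}$ shows that any non-integral column also contains at least two non-integral coordinates. I then form the bipartite graph whose two vertex classes are the $r$ non-integral rows and the $c$ non-integral columns, with an edge for each non-integral entry of $P$; each such entry lies in a unique non-integral row and a unique non-integral column. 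Every vertex of this graph has degree at least two, so the degree sum yields $2k(P)\ge 2(r+c)=2\omega(P)$, which gives $k(P)\ge\omega(P)$.

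For equality, every vertex of the bipartite graph must have degree exactly two, so each non-integral row (respectively column) contains exactly two non-integral coordinates. By Theorem~\ref{lem:threeplanes} and its analog for $DH(3)$, a non-integral row or column can lie on at most two of its corresponding facets, since lying on three or more would force integrality. To show it lies on at least two, I will split cases on the parity of $|I|$, where $I$ indexes the integral row coordinates equal to $1$. When $|I|$ is even, the subsets $A=I\cup\{a\}$ and $A=I\cup\{b\}$ (with $a,b$ the two non-integral indices) jointly force $p_a=p_b$ and saturate both $A$-inequalities; when $|I|$ is odd, the subsets $A=I$ and $A=I\cup\{a,b\}$ jointly force $p_a+p_b=1$ and saturate both. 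The column case is handled by an analogous short sub-split on whether the lone integral coordinate of the column equals $0$ or $1$.

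The main obstacle I expect is the bookkeeping in this final parity analysis: verifying in each case that the proposed odd subsets really do saturate the defining inequalities of $\Delta'(m)$, and tracking the values the two non-integral coordinates are forced to take. Once these are in hand, the corollary follows by combining the double count with the two-facet characterization.
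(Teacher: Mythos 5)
Your proof is correct. The paper states this corollary without proof, leaving it as a consequence of Theorems~\ref{thm:twononinteger}--\ref{lem:threeplanes}, and your double-counting argument on the bipartite row/column incidence graph is exactly the intended route to $k(P)\geq\omega(P)$: each non-integral row has at least two non-integral entries by Theorem~\ref{thm:twononinteger}, each non-integral column likewise since the column projection satisfies the $DH(3)$ inequalities, and summing degrees gives the bound. For the equality case you correctly observe that the paper's results only supply the upper bound (three or more facets forces integrality, via Theorem~\ref{lem:threeplanes}), and that the lower bound --- a row or column with exactly two non-integral coordinates $p_a,p_b$ must lie on at least two of its pseudo-facets --- is not explicitly established anywhere in the paper. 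Your parity analysis fills this in correctly: with $I$ the set of indices equal to $1$, the pairs of odd sets $I$ and $I\cup\{a,b\}$ (when $|I|$ is odd) force $p_a+p_b=1$ with both inequalities tight, while $I\cup\{a\}$ and $I\cup\{b\}$ (when $|I|$ is even) force $p_a=p_b$ with both tight; the column case is the same computation with $m=3$. This is a worthwhile addition, since the same implicit step is also relied upon later in the proof of Theorem~\ref{thm:Deltaprimeintegral}.
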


\subsection{Pseudo-Facet Classification}
\label{subsec:same}
In addition to knowing how the pseudo-facet structure constrains coordinate integrality, the proof of the integrality of $\Delta'(m)$ requires an additional classification of the pseudo-facets of $PDH(m)$.
\begin{definition}
\label{def:SOfaces}
Let $P$ be a point in $PDH(m)$ with exactly two non-integral coordinates, $p_{i}$ and $p_{j}$. Assume $P$ lies on a pseudo-facet $H$ corresponding to a set $A$. If $i$ and $j$ are both elements of $A$ or both elements of $A'$, we call $H$ a same facet or \emph{S-facet}. Otherwise, we call $H$ an opposite or \emph{O-facet}.
\end{definition}

This classification of facets is dependent on a choice of a point and does not universally classify facets into two disjoint sets. However, as we only apply the concept to the case when a point has been specified we suppress the unneeded notation that would indicate that the classification is function of a point $P$.

\begin{theorem}
\label{thm:parity}
Let $P$ be a point in $PDH(m)$ with exactly two non-integral coordinates, $p_{1}$ and $p_{2}$, which lies on a pseudo-facet $H$. Let $I=\{i| p_i=1\}$.  It follows that \[|I| \mbox{ is} \left\{ \begin{array}{ll} 
\mbox{odd} & \mbox{ if }H \mbox{ is a S-facet}\\ 
\mbox{even} & \mbox{ if }H \mbox{ is an O-facet.} 
\end{array} \right \}. \]
\end{theorem}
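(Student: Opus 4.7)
The plan is to use the fact that $P \in PDH(m) \subseteq [0,1]^m$ together with the assumption that $p_1,p_2$ are the only non-integral coordinates, so every other entry of $P$ equals $0$ or $1$. This lets me rewrite the pseudo-facet equation
\[
\sum_{j \in A} p_j \;=\; |A|-1 + \sum_{j \notin A} p_j
\]
purely in terms of $p_1,p_2$ and four non-negative integers: $a=|A \cap I|$, $b=|I\setminus A|$, the number of zero coordinates in $A$, and the number of zero coordinates in $A'$. I then split into three cases according to how the non-integral indices $1,2$ are distributed between $A$ and $A'$, corresponding exactly to the S/O classification.

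In the S-case with both indices in $A$, the equation reduces to $p_1+p_2 = 1 + b + c$, where $c\ge 0$ counts the remaining zero coordinates of $A$. Since $p_1+p_2<2$, this forces $b=c=0$, giving $A = I \cup \{1,2\}$, so $|A| = |I|+2$. The odd cardinality of $A$ then yields $|I|$ odd. The S-case with both indices in $A'$ is analogous: the equation becomes $p_1+p_2 = 1 - b - c$, and positivity of $p_1+p_2$ forces $b=c=0$, giving $A=I$ and $|I|=|A|$ odd.

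For the O-case, say $1 \in A$ and $2 \notin A$. The equation now rearranges to $p_1 - p_2 = b + c$, where $c\ge 0$ counts zero coordinates of $A\setminus\{1\}$. Since $|p_1-p_2|<1$, the non-negativity of $b,c$ again forces $b=c=0$ (and $p_1=p_2$), so $A = I \cup \{1\}$ and $|A| = |I|+1$. Odd cardinality of $A$ then gives $|I|$ even, completing the classification.

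The proof is essentially combinatorial bookkeeping; there is no real obstacle beyond keeping track of which of the integer cardinalities absorbs the indices $1,2$ in each case. The structural reason everything collapses so cleanly is that $p_1\pm p_2$ is trapped in an open interval of length strictly less than $2$, while the right-hand side is an integer plus a sum of non-negative integers, so all slack parameters are pinned to zero.
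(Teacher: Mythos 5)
Your proposal is correct and follows essentially the same route as the paper: substitute the integral coordinates into the pseudo-facet equation, split into the three cases according to where the two non-integral indices sit relative to $A$, and use the fact that $p_1+p_2\in(0,2)$ (resp. $p_1-p_2\in(-1,1)$) to pin all the integer slack terms to zero, yielding $|A|=|I|+2$, $|A|=|I|$, or $|A|=|I|+1$. Your bookkeeping with the explicit non-negative slack parameters $b$ and $c$ is just a slightly more systematic packaging of the paper's inequalities $|A\cap I|\le|A|-2$ and $|A\cap I|\le|A|-1$.
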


\begin{proof}
Let $P \in PDH(m)$ have exactly two non-integral coordinates, $p_1$ and $p_2$, that lie on a pseudo-facet $H$ with index set $A$.
\[H:\sum_{i \in A}p_{i} = |A|-1+ \sum_{j \not\in A}p_{j},\]

If $H$ is a S-facet, then to satisfy $H$, we must have $p_{1}+p_{2}\in\bZ$. Since ${\{p_1,p_2\} \in (0,1)}$, we must have $p_1+p_2=1$.

Now assume $\{p_1,p_2\}\in A$, then following equation represents $H$:
\[ p_{1}+p_{2}+|A \cap I|=| A| -1 +|A' \cap I|.\]
Since $|A \cap I| \le |A|-2$, this equation can only be satisfied when $|A \cap I|=|A|-2$ and $A' \cap I = \emptyset$. This means $|I|=|A|-2$,which is odd because $| A|$ is odd. 

If we assume $\{p_1,p_2\}\in A'$, we obtain the following equation representing H:
\[ |A \cap I|=| A| -1 + p_{1}+p_{2}+|A' \cap I|,\]
which reduces to 
\[ |A \cap I|=| A| +|A' \cap I|.\]
This equation can only be satisfied when $|I|=|A|$ which forces $|I|$ to be odd. 

If $H$ is an O-facet, then we must have $p_{1}-p_{2}\in\bZ$. Since $p_{1},p_{2}\in (0,1)$, this forces $p_{1}=p_{2}$. After canceling the non-integral coordinates, the equation for defining $H$ reduces to,
\[ |A \cap I|=| A| -1 +|A' \cap I|.\]
Since $|A \cap I| \le |A|-1$, the preceding equation can only be satisfied if $|A \cap I|=|A|-1$, and $A' \cap I= \emptyset$. This demonstrates that $|I|=|A|-1$, and is therefore even.
\end{proof}

\begin{corollary}
\label{lem:s-o}
If $P \in PDH(m)$ has exactly two non-integral coordinates, then $P$ cannot lie on both an O-facet and an S-facet. 
\end{corollary}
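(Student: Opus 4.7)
The plan is to derive this corollary essentially for free from Theorem~\ref{thm:parity}, using a parity contradiction. First I would fix a point $P \in PDH(m)$ with exactly two non-integral coordinates $p_1, p_2$, and let $I = \{i \mid p_i = 1\}$; note that $|I|$ is a single integer determined by $P$, independent of any particular facet $P$ lies on.

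Next, I would suppose for contradiction that $P$ lies simultaneously on an S-facet $H_S$ and an O-facet $H_O$. Applying Theorem~\ref{thm:parity} to $H_S$ gives that $|I|$ is odd, while applying the same theorem to $H_O$ gives that $|I|$ is even. Since $|I|$ cannot be both parities at once, no such $P$ exists.

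There is really no obstacle here — this is a one-line logical consequence of the preceding theorem, and the only thing to be careful about is confirming that the quantity $|I|$ in the statement of Theorem~\ref{thm:parity} depends only on $P$ (through its integral coordinates equal to $1$) and not on the choice of facet, so that invoking the theorem twice forces the same integer to have incompatible parities.
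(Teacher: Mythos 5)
Your proof is correct and is exactly the argument the paper intends: the corollary follows immediately from Theorem~\ref{thm:parity} by the parity contradiction on $|I|$, which depends only on $P$. Your added care in noting that $|I|$ is facet-independent is the right (and only) point to check.
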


\section{Proof of the Facet Description for the {Kimura-3} Polytope}
\label{sec:integrality}

The properties of the pseudo-demihypercubes discussed in \S \protect\ref{sec:alternative} will be used to show that $\Delta'(m)$ is an integral polytope by demonstrating there is an open interval containing any non-integral point in $\Delta'(m)$. 

\begin{definition} A point $P$ is in the \emph{interior} of $\Delta'(m)$ if there exists a non-zero vector $v\in \bR^{3m}$ and an $\epsilon > 0$ such that $P+\lambda v \in \Delta'(m)$ whenever $\lambda \in (-\epsilon, \epsilon)$.
\end{definition}

The proof of integrality of $\Delta'(m)$ utilizes the following two lemmas which serve as tools for demonstrating that a non-integral point $P$ is in the interior of $\Delta'(m)$.

\begin{lemma}
\label{lem:compatibleinterval}
Let $P\in \Delta'(m)$. If the number of non-integral coordinates, $k(P)$, is greater than than $\omega(P)$, the sum of the number of rows and columns which contain non-integral coordinates, then $P$ is in the interior of $\Delta'(m)$. 
\end{lemma}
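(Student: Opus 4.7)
The plan is to exhibit, for a point $P\in\Delta'(m)$ with $k(P)>\omega(P)$, a nonzero vector $v\in\mathbb{R}^{3m}$ supported on the non-integral coordinates of $P$ that lies in the kernel of the defining functional of every tight facet of $\Delta'(m)$ at $P$. Any such $v$ will place $P$ in the interior: the tight facet equalities are preserved along $P+\lambda v$ by construction, the finitely many other defining inequalities remain strict for $|\lambda|$ small by continuity, and the integer coordinates are not perturbed.

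Constructing $v$ reduces to a rank computation. Let $N$ be the set of non-integral positions of $P$, so that $\dim\mathbb{R}^{N}=k(P)$. Each tight row facet and each tight column facet cuts $\mathbb{R}^{N}$ with one linear equation; I would show that the equations coming from any single row (respectively column) containing a non-integral coordinate span a subspace of dimension at most one. That would yield a total rank bound of $R+C=\omega(P)$, which is strictly less than $k(P)$ by hypothesis, so a nonzero vector survives in the kernel.

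For the one-equation-per-row bound I would argue in two cases. If row $i$ has $k_{i}\geq 3$ non-integrals, Theorems~\ref{thm:twoplanes} and~\ref{lem:threeplanes} force the row-projection to lie on at most one pseudo-facet, giving at most one constraint. If $k_{i}=2$, Corollary~\ref{lem:s-o} guarantees that all tight pseudo-facets at the row-projection are of a single type; substituting into the facet equations (and using that integer coordinates are annihilated by $v$) shows that every S-facet contributes the equation $v_{i,a}+v_{i,b}=0$ while every O-facet contributes $v_{i,a}-v_{i,b}=0$, where $a,b$ index the two non-integral columns of row $i$, so the row's contribution again has rank at most one. The identical argument applies column by column once I verify that the proofs of Theorems~\ref{thm:twononinteger}--\ref{thm:parity} transfer verbatim to the column pseudo-demihypercube after exchanging the roles of rows and columns and restricting the index sets to odd subsets of $\{1,2,3\}$.

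The main obstacle is the collapse-to-one-equation step when $k_{i}=2$: it hinges on the S/O dichotomy of Corollary~\ref{lem:s-o}, and it is what turns the naive per-row bound of two tight facets into an effective bound of one independent equation. Once that rank bound is secured, selecting a nonzero element of the kernel and shrinking $\epsilon$ so that all non-tight facet inequalities remain strict will complete the argument.
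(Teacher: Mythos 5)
Your proposal is correct and follows essentially the same route as the paper's proof: zero out $v$ on integral coordinates, observe that each row or column with non-integral entries contributes at most one independent homogeneous constraint (using the three-facet and two-facet theorems together with the S/O dichotomy to collapse two tight facets to a single equation), and conclude from $k(P)>\omega(P)$ that a nontrivial kernel vector exists. Your write-up is in fact slightly more explicit than the paper's about the rank-one collapse via Corollary~\ref{lem:s-o}, but the argument is the same.
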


\begin{proof}
Let $P$ be a point in $\Delta'(m)$ such that $k(P)>\omega(P)$.
We construct a vector $v$, and $\epsilon > 0$ such that $P +\lambda v\in \Delta'(m)$ for all $\lambda \in (-\epsilon,\epsilon)$.

If $x'_{i,j}$ is an integral coordinate of $P$, then we set $v_{i,j}=0$. We setup a linear system of equations to solve for the remaining coordinates of $v$. For convenience, we linearly reorder the coordinates of $P$ such that $x_1,x_2,\cdots, x_k$ are non-integral.

Let $M$ be the $3 \times m$ matrix representation of $P$. For each row or column of $M$ containing non-integral coordinates, $P$ may lie on zero, one, or two of the associated facets. If $P$ does not lie on any facet, then there exists an $\epsilon$ such that $P+\lambda v$ will satisfy the corresponding inequalities for any choice of $v$.

If $P$ lies on a single facet $H$, then $H$ defines a single homogeneous linear relation on $v_1,v_2,\cdots, v_k$, since $P+v$ would satisfy the same linear relation so long as ${\displaystyle\sum_{j\in A} v_j - \displaystyle\sum_{j \not \in A} v_j = 0}$ where $A$ is the indexing set which defines the facet.

Additionally, if $P$ lies on two facets in a particular row or column, by the proof of Theorem~\protect\ref{thm:parity} the coordinates $v_1,\cdots,v_k$ must satisfy a single linear homogeneous relationship.  Explicitly we set $v_i=0$ if the $i$th coordinate is integral, and set the sum (or difference) of the $v$ coordinates equal to zero for the two indices corresponding to non-integral coordinates of $P$.

Therefore we get a system of up to $\omega(P)$ homogeneous linear equations in $k(P)$ unknowns. By the hypothesis $k(P) > \omega(P)$, so there exists a nontrivial solution for $v$. Since $v_{i,j}= 0$ whenever $P_{i,j}$ is integral we may choose an $\epsilon$ such that $\lambda v + P \in [0,1]^{3m}$ for all $\lambda \in (-\epsilon,\epsilon)$. It follows that $\lambda v + P \in \Delta'(m)$ for all $\lambda \in (-\epsilon,\epsilon)$.
\end{proof}

Lemma~\protect\ref{lem:compatibleinterval} is sufficient for constructing an interval for most non-integral points of $\Delta'(m)$. When $k(P)=\omega(P)$ we explicitly construct an interval containing $P$. Up to reordering of the rows and columns, only the two configurations of non-integral coordinates, shown in Figure~\protect\ref{2cases}, allow for $P$ be on exactly two row-facets and two column-facets for each row and column with a non-integral coordinate in $\Delta'(m)$. 
\begin{figure}
\[P_1=
\left(\begin{array}{cccccc}
\y x'_{1,1} & \y x'_{1,2} & x'_{1,3} & \cdots & x'_{1,m} \\
\y x'_{2,1} & \y x'_{2,2} & x'_{2,3} & \cdots & x'_{2,m} \\
x'_{3,1} & x'_{3,2} & x'_{3,3} &\cdots & x'_{3,m}
 \end{array}\right) P_2=
\left(\begin{array}{cccccc}
\y x'_{1,1} & \y x'_{1,2} & x'_{1,3} &\cdots & x'_{1,m} \\
\y x'_{2,1} & x'_{2,2} & \y x'_{2,3} & \cdots & x'_{2,m} \\
x'_{3,1} & \y x'_{3,2} & \y x'_{3,3} & \cdots & x'_{3,m}
 \end{array}\right)
\]
\caption{Configurations of non-integral coordinates for Lemma~\protect\ref{lem:intervalconstruction}. Coordinates in bold are the only non-integral coordinates of $P$.}
\label{2cases}
\end{figure}

\begin{lemma}
\label{lem:intervalconstruction}
If $P \in \Delta'(m)$ has one of the configurations of non-integral coordinates shown in Figure \ref{2cases}, then $P$ is in the interior of $\Delta'(m)$. 
\end{lemma}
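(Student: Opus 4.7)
The plan is to construct a nonzero direction $v \in \bR^{3m}$ supported on the non-integer coordinates of $P$, together with $\epsilon > 0$, so that $P + \lambda v \in \Delta'(m)$ for all $\lambda \in (-\epsilon, \epsilon)$. Since $v$ vanishes on the integer entries of $P$ and the defining facets that $P$ satisfies strictly continue to be satisfied under small perturbations, the only requirement on $v$ is that it preserve every facet on which $P$ actually lies.

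First I would apply Corollary~\ref{cor:cases28}: because $k(P) = \omega(P)$ in both configurations of Figure~\ref{2cases}, each row and each column of $P$ containing a non-integer coordinate lies on exactly two pseudo-facets. The proof of Theorem~\ref{thm:twoplanes} shows that the pair of pseudo-facets on such a row (or column) imposes a single homogeneous linear equation on the two non-integer entries of $v$ there: $v_a + v_b = 0$ if the row/column is S-type and $v_a - v_b = 0$ if it is O-type. By Theorem~\ref{thm:parity}, the S/O classification of a row/column $X$ coincides with the parity of $|I_X|$. Writing $\varepsilon_X = -1$ for S-type and $\varepsilon_X = +1$ for O-type, I would fix $v_{1,1} = 1$ and propagate the equations around the bipartite graph of active rows and columns. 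This graph is a $4$-cycle in configuration $P_1$ and a $6$-cycle in $P_2$, so all remaining entries of $v$ are determined, and consistency at the closing edge holds iff $\prod_X \varepsilon_X = +1$, equivalently $\sum_X |I_X|$ is even.

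Verifying this parity is the main obstacle. I would establish two auxiliary facts about $\Delta'(m)$: (a)~every integer column of $P$ has column sum $0$ or $2$, because an integer column in $\{0,1\}^3$ with sum $1$ violates a triangle inequality $x'_{i,j} \le x'_{i',j} + x'_{i'',j}$ (the $B = \{i\}$ facet), while sum $3$ violates the $B = \{1,2,3\}$ facet $\sum_i x'_{i,j} \le 2$; and (b)~every integer row of $P$ has even row sum, by taking $A$ to be the set of $1$-positions of that row and applying the $A$-inequality as in the proof of Theorem~\ref{thm:twononinteger}. For $P_1$ the quantity $\sum_X |I_X|$ expands to $\sum_{j \ge 3}(x'_{1,j} + x'_{2,j}) + x'_{3,1} + x'_{3,2}$; fact~(a) lets me replace $x'_{1,j} + x'_{2,j}$ by $x'_{3,j}$ modulo $2$ in each column $j \ge 3$, reducing the sum modulo $2$ to the integer row-$3$ sum, which is even by~(b). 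For $P_2$ the three integer entries $(1,3)$, $(2,2)$, $(3,1)$ of the top-left $3\times 3$ block are each counted twice (once in a row and once in a column), while entries $(i,j)$ with $1 \le i \le 3$ and $j \ge 4$ are each counted once; the residue modulo $2$ is therefore $\sum_{j \ge 4}(x'_{1,j}+x'_{2,j}+x'_{3,j})$, a sum of integer column sums, which is even by~(a).

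Finally, once consistency produces a nonzero $v$, the non-integer entries of $P$ lie strictly between $0$ and $1$, so for all sufficiently small $\epsilon > 0$, $P + \lambda v$ stays within $[0,1]^{3m}$; by construction, the facets on which $P$ lies are preserved exactly, while inactive facets remain strict. Hence $P + \lambda v \in \Delta'(m)$ for all $\lambda \in (-\epsilon, \epsilon)$, showing that $P$ lies in the interior of $\Delta'(m)$.
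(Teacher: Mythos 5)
Your proposal is correct and follows the same overall strategy as the paper's proof: build a perturbation vector $v$ supported on the non-integral coordinates, propagate signs around the cycle of active rows and columns according to the S/O type of each (a $4$-cycle for $P_1$, a $6$-cycle for $P_2$), and observe that the construction closes up consistently exactly when the number of S-type constraints is even. The one place where you genuinely diverge is the verification of that evenness. The paper argues via a double count of $|I|$ (half the row $1$-counts plus half the column $1$-counts must be an integer, and Theorem~\ref{thm:parity} ties each summand's parity to the S/O type), which leaves the contribution of the fully integral rows and columns implicit. You instead reduce $\sum_X |I_X| \bmod 2$ directly, using two structural facts you prove from the facet inequalities --- every fully integral column has sum $0$ or $2$ (from the $B=\{i\}$ and $B=\{1,2,3\}$ column facets) and every fully integral row has even sum (from the row $A$-inequalities) --- and then check each configuration explicitly. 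This is a more careful accounting: it makes visible exactly why the integral rows and columns contribute an even amount, which is the step the paper's version glosses over. Your reliance on Corollary~\ref{cor:cases28} to get ``exactly two facets per active row/column'' is harmless even where that corollary is used loosely, since any active row or column lying on fewer facets only weakens the constraint system and makes the consistency check easier.
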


\begin{proof}
Given a point $P \in \Delta'(m)$ with a configuration of non-integral coordinates as displayed in Figure~\protect\ref{2cases} we demonstrate that $P$ is in the interior of $\Delta'(m)$. 

In these two cases each row and column has at most two non-integral coordinates, so we use the S and O-facet description to assist in the construction. Let $|S|$ be the number of S-facets that $P$ lies on, and $|O|$ be the number of $O$-facets that $P$ lies on. 

To build the interval, we first note that for each configuration in Figure~\protect\ref{2cases} there exists a Hamiltonian cycle: \[(p_1,p_2,\cdots,p_k)=x'_{1,1}, x'_{2,1},\cdots x'_{1,1}\] in the graph with vertices corresponding to non-integral coordinates, and edges connecting non-integral coordinates in the same row or column. 

In the case of $P_1$ (resp. $P_2$) we reorder the coordinates of $v=(v_1,v_2,v_3,v_4, \cdots, v_{3m})$ (resp. $v=(v_1,\cdots v_6,\cdots, v_{3m})$) with the first four (resp. six) coordinates corresponding to the non-integral coordinates of $P_1$ (resp. $P_2$) in the order of the Hamiltonian cycle. 

Using the reordered coordinates we construct a vector $v$ as follows. First set $v_1=1$, and for $v_1$ through $v_4$ (resp. $v_6$) assign:
\[v_{i+1}=\left\{ \begin{array}{lr} 
v_i &\mbox{if } p_i \mbox{ and }p_{i+1} \mbox{ lie on an O-facet}\\ 
-v_i &\mbox{if } p_i \mbox{ and }p_{i+1} \mbox{ lie on an S-facet}. 
\end{array} \right. \]

We set $v_i=0$ for all remaining coordinates. Such a collection is consistent for the set of linear constraint defined in the proof of Lemma~\protect\ref{lem:compatibleinterval} applied to each consecutive pair of non-integral coordinates.

The constraint induced by $v_1$ and $v_4$ (resp. $v_6$) is only consistent if there are an even number of S-facets, otherwise $v_1$ would have to simultaneously be one and negative one

To show that there are an even number of S-facets we let $I$ denote the set of indices such that $x'_{i,j}=1$. We can compute $|I|$ by taking half of the number of coordinates with value one in each row, plus half of the number of coordinates with value one in each column.

Let $|O|$ denote the number of rows or column facets for which $P$ restricted to that row (resp. column) lies only on $O$-facets, and $|S|$ denote the rows or column facets for which $P$ restricted to that row (resp. column) lies only on $S$-facets. By Theorem~\protect\ref{thm:parity} every S-column and S-row contains an odd number of coordinates with value one, while every O-row and O-column must contain an even number of coordinates with value one. Therefore since $|I|=\frac{1}{2}((2q_1+1)|S|+2q_2|O|)$ is an integer, we know that the number of S-facets must be even. This confirms the consistency of the vector $v$.

We now choose an $\epsilon$ which restricts $\lambda v + P$ to $[0,1]^{3m}$. Then it follows from Theorem~\protect\ref{thm:parity} that $P + \lambda v \in \Delta'(m)$ for all $\lambda \in (-\epsilon,\epsilon)$. Thus $P$ is in the interior of $\Delta'(m)$.
\end{proof}

We utilize these lemmas to prove that $\Delta'(m)$ is integral. 

\begin{theorem}
\label{thm:Deltaprimeintegral}
The polytope $\Delta'(m)$ is integral. 
\end{theorem}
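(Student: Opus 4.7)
The plan is to show that every non-integral point $P \in \Delta'(m)$ lies in the interior of $\Delta'(m)$, from which it follows that no vertex of $\Delta'(m)$ is non-integral. I would track the quantities $k(P)$ and $\omega(P)$, which by Corollary~\protect\ref{cor:cases28} satisfy $k(P) \geq \omega(P)$, and split according to whether this inequality is strict.

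The case $k(P) > \omega(P)$ is handled immediately by Lemma~\protect\ref{lem:compatibleinterval}. The substantive case is $k(P) = \omega(P)$, where the goal is to show that, after permuting rows and columns, $P$ realizes one of the two configurations $P_1$, $P_2$ of Figure~\protect\ref{2cases}; Lemma~\protect\ref{lem:intervalconstruction} then concludes. The equality clause of Corollary~\protect\ref{cor:cases28} says $P$ lies on exactly two row-facets of each non-integral row and exactly two column-facets of each non-integral column. Combining Theorem~\protect\ref{thm:twoplanes} (at most two non-integrals on two facets) with Theorem~\protect\ref{thm:twononinteger} (at least two), each non-integral row contains exactly two non-integral entries, and the symmetric statements for columns yield the same for each non-integral column.

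Letting $r$ and $c$ denote the number of non-integral rows and columns, counting $k(P)$ row-wise and column-wise gives $2r = k(P) = 2c$, so $r = c$. Since $\Delta'(m)$ has only three rows, $r = c \in \{1,2,3\}$. The case $r = 1$ is impossible, since two non-integral entries in a single row must lie in two distinct columns. When $r = c = 2$, the bipartite graph on the $2 + 2$ non-integral rows and columns with edges at non-integral entries is $2$-regular and must be $K_{2,2}$, producing the $2 \times 2$ block of $P_1$. When $r = c = 3$, the corresponding $2$-regular simple bipartite graph on $3+3$ vertices must be a single $6$-cycle (the only $2$-regular simple bipartite graph on those vertex sets), yielding $P_2$.

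The main anticipated obstacle is the column analogue of Theorems~\protect\ref{thm:twononinteger} and~\protect\ref{thm:twoplanes}, which are proved only for rows of $PDH(m)$. The cleanest justification comes from Lemma~\protect\ref{thm:isomorphism}: each column of $\Delta'(m)$ lies in $DH(3)$, so a direct check of the tetrahedron $DH(3)$ shows that every non-integral column must have at least two non-integral coordinates, and the three column-facet inequalities are structurally identical to the row-facet inequalities with rows and columns swapped, so the proofs of the two theorems transport verbatim to the column setting.
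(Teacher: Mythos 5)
Your proposal is correct and follows essentially the same route as the paper: show every non-integral point is interior by splitting on whether $k(P)>\omega(P)$ (apply Lemma~\protect\ref{lem:compatibleinterval}) or $k(P)=\omega(P)$ (reduce to the configurations of Figure~\protect\ref{2cases} and apply Lemma~\protect\ref{lem:intervalconstruction}). Your bipartite $2$-regularity argument pinning down $P_1$ and $P_2$, and your explicit handling of the column analogues of Theorems~\protect\ref{thm:twononinteger} and~\protect\ref{thm:twoplanes} via $DH(3)$, make explicit steps the paper leaves to the reader, but the underlying argument is the same.
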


\begin{proof}
Let $P$ be a non-integral point in $\Delta'(m)$. By Theorem~\protect\ref{lem:threeplanes} there exists a row or column for which $P$ lies on two or fewer row or column facets.

If every non-integral coordinate is on a row and column for which $P$ lies on exactly two facets, then by Theorem~\protect\ref{thm:twoplanes} there are exactly two non-integral coordinates in each such row and column. By reordering the coordinates we can thus assume $P$ has one of the configurations in Figure~\protect\ref{2cases}. Then, by Lemma~\protect\ref{lem:intervalconstruction}, $P$ is is an interior point of $\Delta'(m)$. 

Otherwise, there must exist a row or column for which $P$ lies on fewer than two facets and thus has more than two non-integral coordinates in that row or column. This ensures that $k(P)> \omega(P)$, and thus by Lemma~\protect\ref{lem:compatibleinterval} $P$ is an interior point of $\Delta'(m)$. Therefore every non-integral point of $\Delta'(m)$ lies in the interior.
\end{proof}

\begin{theorem}
\label{thm:Deltaintegral}
The polytope $\Delta(m)$ is an H-representation of $K(m)$.
\end{theorem}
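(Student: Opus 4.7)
The plan is to obtain the final theorem as a short assembly of three results already proved in the paper, with no new technical content required. First I would invoke Theorem~\ref{thm:Deltaprimeintegral} to conclude that $\Delta'(m)$ is integral. Then Corollary~\ref{cor:isomorphicint} transfers this integrality back across the coordinate change $f$, yielding that $\Delta(m)$ itself is integral. Finally, Theorem~\ref{thm:intimpliesans} states that integrality of $\Delta(m)$ is sufficient to promote the containment $K(m) \subseteq \Delta(m)$ of Theorem~\ref{thm:KinDelta} to an equality $\Delta(m) = K(m)$.

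Once $\Delta(m) = K(m)$ is in hand, the conclusion that $\Delta(m)$ is an H-representation of $K(m)$ is essentially a matter of unpacking Definition~\ref{def:delta}. That definition presents $\Delta(m)$ as the solution set in $\bR^{3m}$ of a finite list of linear inequalities, namely the nonnegativity constraints $x_{ij}\ge 0$, the column-simplex constraints $\sum_i x_{ij}\le 1$, and the $A$-inequalities indexed by the odd-cardinality subsets $A\subseteq \{1,\dots,m\}$ and the three pairings of row indices. This is by construction a half-space presentation, so combined with the equality $\Delta(m)=K(m)$ it is an H-representation of the Kimura-3 polytope.

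The main obstacle to this theorem was entirely absorbed into the preceding section: showing that every non-integral point of $\Delta'(m)$ lies in the relative interior of the polytope, which required the pseudo-demihypercube machinery, the S-facet/O-facet parity analysis, and the explicit interval constructions of Lemmas~\ref{lem:compatibleinterval} and~\ref{lem:intervalconstruction}. At the level of the final theorem itself, there is nothing left to do but cite the chain of prior results, so the proof will be a one- or two-sentence corollary-style argument rather than a substantive new computation.
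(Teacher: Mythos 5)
Your proposal is correct and matches the paper's proof exactly: both cite Theorem~\ref{thm:Deltaprimeintegral} for integrality of $\Delta'(m)$, transfer it via Corollary~\ref{cor:isomorphicint}, and conclude $K(m)=\Delta(m)$ from Theorem~\ref{thm:intimpliesans}. Nothing further is needed.
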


\begin{proof}
By Theorem~\protect\ref{thm:Deltaprimeintegral} the polytope $\Delta'(m)$ is integral. Applying Corollary~\protect\ref{cor:isomorphicint} shows $\Delta(m)$ is also integral. Finally, by Theorem~\protect\ref{thm:intimpliesans} we have $K(m)=\Delta(m)$.
\end{proof}

\section{Conclusion}
\label{sec:conclusion}
Phylogenetic varieties have been used to answer identifiability questions \protect\cite{twotreemixture,threetreemixture} and to develop tree reconstruction algorithms \protect\cite{eriksvd, invnew,ibqp}. Greater understanding of the geometry of phylogenetic varieties, including an understanding of the singularity locus, has improved the speed and accuracy of these reconstruction methods \protect\cite{relevant,jcineq}. Additionally, in light of recent connections with conformal blocks and Berenstein-Zelevinsky triangles, a deeper understanding of phylogenetic varieties is also of interest to algebraic geometers \protect\cite{cblocksandBZ}. 

The H-representation provides a new vantage point for understanding the {Kimura-3} variety. The authors hope this will lead to a better understanding of the geometry and associated biology of the {Kimura-3} variety, and of group based models in general. To this end, we describe open problems of both mathematical and biological interest. We begin with a fundamental question about the geometry of the {Kimura-3} variety.
\begin{op}
Classify the singularity structure of the {Kimura-3} varieties. (see \protect\cite{binary} for an analogous study in the binary symmetric case)
\end{op}

 In the binary symmetric case, the variety associated to any tree with $m$ leaves is deformation equivalent to the variety associated to the $m$-claw tree \protect\cite{binary}. While such a relationship does not hold in the {Kimura-3} case \protect\cite{hilbert}, one would still like to understand the geometric relationship among varieties associated to different $n$ leaf trees. From a biological perspective this problem can be posed as follows:

\begin{op}
Describe the geometric relationship between two {Kimura-3} varieties whose trees differ by a single nearest neighbor interchange.
\end{op}

These geometric questions are closely related to the combinatorics of the polytopes themselves. We hope that the H-representation will help provide an answer to the following combinatorial problem:

\begin{op}
Compute the $f$-vector and Hilbert polynomial of the polytope associated to the {Kimura-3} model for the $m$-claw tree. 
\end{op}

\begin{op}
Describe the H-representation for the polytope associated to the $m$-claw tree for an arbitrary finite abelian group.
\end{op}

\section{Acknowledgements}
This material is based upon work supported by the National Science Foundation under Grant No. DMS-1358534. Research reported in this publication was supported by an Institutional Development Award (IDeA) from the National Center for Research Resources (5 P20 RR016461) and the National Institute of General Medical Sciences (8 P20 GM103499) from the National Institutes of Health.

The authors would like to thank Valery Alexeev, Wayne Anderson, Kaie Kubjas and Mateusz Michalek for their comments and suggestions during the development of this work.

\bibliographystyle{siam}

\bibliography{sample}

\end{document}